% ===========================================
% File:   FrobSquare.tex
% Date:   2011/2/01
% Author: B. Elias
% 
% =========================================
 
\documentclass[12pt]{amsart}
\setlength{\textheight}{8in}

\usepackage{amsfonts,amsmath,stmaryrd}
\usepackage{amssymb,latexsym}
\usepackage[dvips]{epsfig}
\usepackage{amscd}
\usepackage{amsthm}
\usepackage{pinlabel}
\usepackage{psfrag}

\RequirePackage{color}
\definecolor{myred}{rgb}{0.75,0,0}
\definecolor{mygreen}{rgb}{0,0.5,0}
\definecolor{myblue}{rgb}{0,0,0.65}

\usepackage[hmargin=3cm,vmargin=3.5cm]{geometry}

%%%%%%% DIAGRAM PACKAGES %%%%%%%%

% xypic
\usepackage{graphicx}
\usepackage[all]{xy}
\SelectTips{cm}{}

%%%%%%%%%%%%%%%%% PROJECT SPECIFIC MACROS %%%%%%
\newcommand{\ot}{\otimes}

\newcommand{\mC}{\mathcal{C}}
\newcommand{\mF}{\mathcal{F}}
\newcommand{\mD}{\mathcal{D}}

\newcommand{\mc}[1]{\mathcal{#1}}

\newcommand{\Bim}{\textbf{Bim}}

%%%%%%%%%%%%%%%%% GENERAL MACROS %%%%%%%%%%%%%%%

\newcommand{\mf}[1]{\mathfrak{#1}}

\newcommand{\Hom}{{\rm Hom}}

\newcommand{\End}{{\rm End}}

\newcommand{\Res}{{\rm Res}}
\newcommand{\Ind}{{\rm Ind}}
\renewcommand{\to}{\rightarrow}
\newcommand{\into}{\hookrightarrow}
\newcommand{\co}{\colon}
\newcommand{\pa}{\partial}
%\newcommand{\op}{{\rm op}}

     %% graded rank 

\newcommand{\ig}[2]{\vcenter{\xy (0,0)*{\includegraphics[scale=#1]{figsq/#2}} \endxy}}

\newcommand{\igc}[2]{\begin{center} \includegraphics[scale=#1]{figsq/#2} \end{center}}

%%%%%%%%%%%%%%%%%%%%%%%%%%%%%%%%%%%%%

\newtheorem*{prop*}{Proposition}
\newtheorem{prop}{Proposition} [section]

\newtheorem{claim}[prop]{Claim}

\theoremstyle{definition}
\newtheorem{defn}[prop]{Definition}
\newtheorem{example}[prop]{Example}

\theoremstyle{remark}
\newtheorem{remark}[prop]{Remark}

% Equation numbering:

\numberwithin{equation}{section}

%

% some Greek letters

\let\phi=\varphi
\let\epsilon=\varepsilon

% some sets

\usepackage{bbm}
\def\C{{\mathbbm C}}

\def\R{{\mathbbm R}}

\def\1{\mathbbm{1}}

% ===================================
%
% End of macro file.
%
% ===================================

\psfrag{I}{$I$}
\psfrag{Icgam}{ $I \setminus \gamma$}
\psfrag{Icgp}{$I \setminus \gamma^\prime$}
\psfrag{Icgcgp}{$I \setminus \{\gamma, \gamma^\prime\}$}
\psfrag{iotaf}{$\iota(f)$}
\psfrag{muBA}{$\mu^B_A$}
\psfrag{paf}{$\pa(f)$}
\psfrag{f}{\small $f$}
\psfrag{A}{$A$}
\psfrag{fg}{$fg$}
\psfrag{B}{$B$}
\psfrag{g}{$g$}
\psfrag{=}{$=$}
\psfrag{B'}{$B'$}
\psfrag{C}{$C$}
\psfrag{deltaBA1}{$\Delta^B_{A\ (1)}$}
\psfrag{deltaBA2}{$\Delta^B_{A\ (2)}$}
\psfrag{xalpha}{$x_\alpha$}
\psfrag{ybeta}{$y_\beta$}
\psfrag{R}{$R$}
\psfrag{R_b}{$R^b$}
\psfrag{R_r}{$R^r$}
\psfrag{R_rb}{$R^{rb}$}
\psfrag{D^b_rb1}{$\Delta^b_{rb,(1)}$}
\psfrag{pa_rD^b_rb2}{$\pa_r \Delta^b_{rb,(2)}$}
\psfrag{pa_rfD^b_rb2}{$\pa_r (f \Delta^b_{rb,(2)})$}
% ===================================

\title{On Cubes of Frobenius Extensions}
\author[Elias, Snyder and Williamson]{Ben Elias, Noah Snyder
  and Geordie Williamson}
%\date{, 2010}

\begin{document}

\maketitle

\begin{abstract} Given a hypercube of Frobenius extensions between commutative algebras, we provide a diagrammatic description of some natural transformations between compositions of
induction and restriction functors, in terms of colored transversely-intersecting planar 1-manifolds. The relations arise in the first and third author's work on (singular) Soergel bimodules.
\end{abstract}

%\setcounter{tocdepth}{1}
%\tableofcontents

%%%%%%%%%%%%%%%%%%%%%%%%%%%%
%%%%%%%%%%%%%%%%%%%%%%%%%%%%
%%
\section{Introduction and Results} 
%%
%%%%%%%%%%%%%%%%%%%%%%%%%%%%
%%%%%%%%%%%%%%%%%%%%%%%%%%%%

%===========================
\subsection{Basic Setup}
%===========================

Let $\Bbbk$ be a base field. We will work entirely within the context of commutative $\Bbbk$-algebras.

\begin{defn} A (commutative) \emph{Frobenius extension} is an extension of rings $\iota \co A \into B$, where $B$ is free and finitely generated as an $A$-module, equipped with an
$A$-linear map $\pa = \pa_A^B \co B \to A$, called the \emph{trace}. The trace is required to be \emph{non-degenerate}. That is, we assume that we can equip $B$ with a pair of bases
$\{x_\alpha\}$ and $\{y_\alpha\}$ as an $A$-module, such that $\pa(x_\alpha y_\beta)=\delta_{\alpha \beta}$. These are called \emph{dual bases}. \end{defn}

This data also equips one with a comultiplication map $\Delta^B_A \co B \to B \ot_A B$ sending $1 \mapsto \sum_\alpha x_\alpha \ot
y_\alpha$, an element which is independent of the choice of dual bases. Note that if $A \subset B$ and $B \subset C$ are Frobenius
extensions, then $A \subset C$ is a Frobenius extension as well, with trace $\pa^C_A = \pa^B_A \pa^C_B$.  A typical example to have in mind is $\Bbbk[x^2] \subset \Bbbk[x]$ with the trace $\pa(f) = (f(x)-f(-x))/x$.

A more familiar situation is when $A=\Bbbk$, at which point $B$ is called a Frobenius algebra. Commutative Frobenius algebras are in bijection with 2-dimensional TQFTs. Frobenius
extensions (or Frobenius objects in any category) are no less ubiquitous. An extension $A \subset B$ of commutative algebras is Frobenius if and only if the functors $\Ind^B_A$ and
$\Res^B_A$ are biadjoint. There are numerous standard
examples: \begin{itemize} \item The inclusion $\C[H] \subset \C[G]$ of
  group algebras for an inclusion $H \subset G$ of finite
    abelian (if we want to keep the commutative assumption)
groups. \item The inclusion of symmetric polynomials in all polynomials. \item Various examples constructed using convolution functors in geometry.\end{itemize} For more background
information see \cite{Kad}.

Rather than just a single Frobenius extension, we will be studying several compatible Frobenius extensions.  For example, we might have a square of extensions $A \subset B$, $B \subset C$, $A \subset D$, and $D \subset C$.  More generally, instead of a square of inclusions we might have a larger hypercube.
%The situation we are interested in will be as follows.

\begin{defn} A \emph{hypercube of Frobenius extensions} or a \emph{Frobenius hypercube} will be the following datum. \begin{itemize} \item
A finite set $\Gamma$. We also use $\Gamma$ to designate the entire datum. We consider the hypercube with vertices labelled by subsets of
$\Gamma$. An edge in this hypercube corresponds to $I \setminus \gamma \subset I$ for some $\gamma \in I \subset \Gamma$, and parallel
edges correspond to the same $\gamma$. \item A (contravariant) assignment of rings $R^I$ to vertices in the cube, so that $I \subset J
\implies R^J \subset R^I$. \item For each edge, a trace map $\pa^{I \setminus \gamma}_I \co R^{I \setminus \gamma} \to R^I$ making $R^{I
\setminus \gamma}$ a Frobenius extension of $R^I$. \end{itemize} We call this hypercube \emph{compatible} if, for every square $I \subset
J,J' \subset K$ (so that $I = K \setminus \{\gamma, \gamma'\}$) we have $\pa^J_K \pa^I_J = \pa^{J'}_K \pa^I_{J'}$. In this case,
there is a well-defined map $\pa^I_J \co R^I \to R^J$ for every $I \subset J$, which endows the extension $R^J \subset R^I$ with the
structure of a Frobenius extension. We assume henceforth that every hypercube of Frobenius extensions is compatible. \end{defn}

We sometimes drop the notation for the empty set, so that $\pa_J$ denotes $\pa^{\emptyset}_J$.

\begin{defn} Let $\Bim$ denote the 2-category whose objects are algebras, and where $\Hom_{\Bim}(A,B)$ is the category of finitely
generated $(B,A)$-bimodules. Horizontal composition is given by tensor product. Given a $(B,A)$-bimodule $M$, we sometimes write it as $_B M_A$ to
emphasize which algebras act on it. For a Frobenius hypercube $\Gamma$, we will denote by $\mC(\Gamma)$ the full sub-2-category of $\Bim$
whose objects are the rings of each vertex, and whose 1-morphisms are generated monoidally by the induction bimodule $_B B_A$ and the
restriction bimodule $_A B_B$ for each edge $A \subset B$. \end{defn}

Note that $\mC(\Gamma)$ does not consist of all bimodules isomorphic to compositions of induction and restriction; it consists only of the compositions themselves. Therefore the
objects of $\mC(\Gamma)$ can be encoded combinatorially as paths through the hypercube, and the monoidal structure is concatenation of paths. This monoidal structure is strict, in the
sense that it is associative up to equality of 1-morphisms, not just associative up to isomorphism. The diagrammatic language for describing 2-categories which we use in this paper, and
which is common in the modern literature, is designed to work only with strict 2-categories, which explains why we do not include bimodules isomorphic to compositions over paths.  In particular, for $I \subset J$ which is not an edge, the bimodule $\Ind^I_J$ is not an
object in $\mC(\Gamma)$, even though this bimodule is isomorphic to the composition of inductions along any path from $I$ up to $J$. The influence of the bimodule $\Ind^I_J$ can still be felt in $\mC(\Gamma)$, evident in the existence of natural isomorphisms between the inductions for different paths from $I$ to $J$.

\begin{example} \label{generalSoergelEx} Here is the example which motivated the authors of this paper. Let $\Gamma$ be the vertices of a Dynkin diagram, let $W$ be its Weyl group, and for
any subset $I \subset \Gamma$ let $W_I$ denote the corresponding
parabolic subgroup. Let $R=\C[\mf{h}]$ denote the polynomial ring of
regular functions on the reflection representation of $W$, and let $R^I$
denote the subring invariant under $W_I$. This gives a hypercube of Frobenius extensions, and the 2-category $\mC(\Gamma)$ (or rather, its Karoubi envelope) is the category of singular
Soergel bimodules, as defined by the second author in \cite{Will},
elaborating on ideas of Soergel \cite{Soe}. Understanding the
2-morphisms in this 2-category can help solve natural questions in the
geometry of flag varieties and Kazhdan-Lusztig theory. \end{example}

\begin{example} \label{SoergelEx} The most familiar version of the Soergel cube is where $R=\C[x_1,\ldots,x_n]$, equipped with the natural action of $W=S_n$. This example plays a
key role in the categorification of quantum $\mathfrak{sl}_2$ given by Khovanov and Lauda \cite{KL}. \end{example}

It is well-known that biadjoint functors and the natural transformations between them (such as the 2-morphisms in $\mC(A \subset B)$ for a Frobenius extension) can be described
using diagrams in the planar strip $\R \times [0,1]$. The goal of this paper is to provide a framework whereby 2-morphisms in $\mC(\Gamma)$ for a Frobenius hypercube can be
described by collections of colored oriented 1-manifolds with boundary, and to give some standard relations which hold under some reasonable restrictions.

%===========================
\subsection{Diagrammatics}
%===========================

We assume the reader is familiar with diagrammatics for 2-categories with biadjoints, and the definition of cyclicity for a 2-morphism. An introduction to the topic can be found in
chapter 4 of \cite{Lau}. If $F \co \mc{A} \to \mc{B}$ and $G \co \mc{B} \to \mc{A}$ are biadjoint functors, then one might draw a particular natural transformation as in Figure
\ref{biadjointexample}. Cups and caps correspond to various units and counits of adjunction. Note that one can deduce the labeling of regions from the orientation, or vice versa, so
that some information is redundant. Not every oriented 1-manifold gives rise to a consistent labeling of regions, and only the consistent ones give rise to natural
transformations.

\begin{figure}
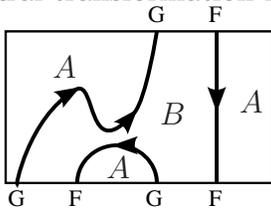
 \caption{A natural transformation from $GFGF$ to $GF$} \label{biadjointexample} $\ig{1.1}{biadjointexample}$ \end{figure}

Let $A \subset B$ be a Frobenius extension, and consider diagrams for $\mC(A \subset B)$. We let an upward-oriented line denote the bimodule $_B B_A$
which corresponds to the functor of induction, and a downward-oriented line denote the restriction bimodule $_A B_B$. Technically
speaking, these 1-morphisms are denoted by oriented points, and their identity 2-morphisms by oriented lines, but we shall abuse notation
like this henceforth. A consistent oriented 1-manifold in the planar strip (up to boundary-preserving isotopy) will unambiguously denote a bimodule morphism.
The 4 possible oriented cups and caps correspond to: inclusion $\iota \co A \into B$, trace $\pa \co B \to A$, multiplication $m \co B
\ot_A B \to B$, and comultiplication $\Delta \co B \to B \ot_A B$.

There are additional bimodule morphisms in $\mC(A \subset B)$ which arise from the action of the rings on themselves. Since each ring
$R=A,B$ is commutative, multiplication by $f \in R$ is an $R$-bimodule endomorphism of $R$. We depict this endomorphism as a \emph{box}
containing $f$, located in a region labelled $R$. For an example, see \eqref{polymult} and following. Because the word ``element" is overused, we refer to elements of any ring as
\emph{polynomials}, even though we do not assume that the rings in question are polynomial rings. We sometimes refer to boxes and
polynomials interchangeably.

Now let $\Gamma$ be a hypercube of Frobenius extensions, and consider diagrams for $\mC(\Gamma)$. Regions should be labelled by subsets $I \subset
\Gamma$. The generating 1-morphisms will be $\Ind^{I \setminus \gamma}_I$ and $\Res^{I \setminus \gamma}_I$ for each edge. In addition to
labeling regions, we add the redundant data of placing orientations and colors on each generating 1-morphism. We orient induction and restriction as
above, and color each line based on the element $\gamma$ which is added or subtracted, so that parallel edges have the same color.

Suppose that $A \subset B \subset C$ are Frobenius extensions, so that $\Ind^C_A \cong \Ind^C_B \Ind^B_A$. In other words, there is a natural isomorphism $C \ot_B B \ot_A A \to C$
as $(C,A)$-bimodules, which sends $f \ot g \ot h = fgh \ot 1 \ot 1 \mapsto fgh$. In section 3.2 of \cite{Kh}, a diagrammatic calculus is developed for a 2-category including the
bimodule $\Ind^C_A$ as well as $\Ind^C_B$ and $\Ind^B_A$, which would depict the natural isomorphism above with a trivalent vertex. \igc{1.3}{unusedtri} Now suppose that $A \subset
B \subset C$ corresponds to $I \subset I \cup \gamma \subset I \cup \{\gamma, \gamma'\}$ in $\Gamma$. As discussed in the previous section, $\Ind^C_B$ and $\Ind^B_A$ are
objects in $\mC(\Gamma)$, but $\Ind^C_A$ is not, so that there is no use for such a trivalent vertex. However, if $B'$ corresponds to $I \cup \gamma'$ then one has a Frobenius
square $A \subset B,B' \subset C$, and an isomorphism $\phi \colon \Ind^C_B \Ind^B_A \to \Ind^C_{B'} \Ind^{B'}_A$ factoring through $\Ind^C_A$, which sends $f \ot 1 \ot 1 \in C
\ot_B B \ot_A A$ to $f \ot 1 \ot 1 \in C \ot_{B'} B' \ot_A A$. We call this map the \emph{induction isomorphism}, and note that its inverse has the same form.

With our drawing convention above, this map $\phi$ should be drawn as a crossing of two differently-colored 1-manifolds. \igc{1.3}{crossingdef} We draw the equivalent
isomorphism $1 \ot 1 \ot f \mapsto 1 \ot 1 \ot f$ for $\Res^C_A$ (the \emph{restriction isomorphism}) as a downward-oriented crossing.

Note that the upward-oriented red strand separating $A$ from $B'$, and the one separating $B$ from $C$, represent two entirely different bimodules; the interpretation of an
upward-oriented red strand depends on the ambient region labels. Though this may cause some initial confusion, it results in a diagrammatic convention whose utility outweighs this minor
hurdle.

The following claim guarantees that an \emph{isotopy class} of diagram
will unambiguously represent a bimodule morphism.  The proof is easy, and appears in Section \ref{further}.

\begin{claim} \label{itscyclic} For a square of Frobenius algebras, the induction isomorphism is cyclic, and rotating it by 180 degrees
yields the restriction isomorphism. \end{claim}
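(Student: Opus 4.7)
The plan is to reduce the claim to the observation that both the induction and restriction isomorphisms are, up to canonical identifications, simply the identity map $\id_C$. Under the canonical $(C,A)$-bimodule isomorphisms $C \otimes_B B \otimes_A A \cong C$ and $C \otimes_{B'} B' \otimes_A A \cong C$ collapsing the tensors, the induction isomorphism $\phi$ is exactly $\id_C$; similarly, the restriction isomorphism $\psi$ is $\id_C$ under the analogous collapses $A \otimes_A B \otimes_B C \cong C \cong A \otimes_A B' \otimes_{B'} C$. So the content of the claim is that the biadjunction machinery passes through these trivial identifications correctly.

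First I would unpack what $180^\circ$-rotation of a 2-morphism means. In a biadjoint setting, for any 2-morphism $\alpha \co F \to G$ there are two candidate rotations (``left'' and ``right'') obtained by pre- and post-composing with units and counits of adjunction on the outgoing strands. For the crossing $\phi$ these rotations are built entirely from the Frobenius data of the four edges of the square: traces $\pa$ and inclusions $\iota$ at the top and bottom, and multiplications $m$ and comultiplications $\Delta$ on the sides.

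Next I would compute one of these rotations directly, producing a 2-morphism $\phi^\vee \co \Res^B_A \Res^C_B \to \Res^{B'}_A \Res^C_{B'}$. Tracing an arbitrary $c \in C$ through the long composition, sums over dual bases appear from the comultiplications at each cup, but all such sums collapse using the defining Frobenius identity $\sum_\alpha x_\alpha \, \pa(y_\alpha b) = b$ for $b \in B$, together with its analogue for $B'$ and for the outer extensions. After simplification the composition returns $c$, so $\phi^\vee$ is the identity on $C$, which is exactly $\psi$. Cyclicity then amounts to verifying that the left-rotation of $\phi$ equals the right-rotation as natural transformations; this follows from the symmetric computation with the roles of $B$ and $B'$ (and of the Frobenius structures on their respective extensions) swapped, once again reducing to $\id_C$.

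The main obstacle is bookkeeping: matching each cup or cap in the rotated diagram to the correct Frobenius structure map on the correct edge of the square, and organizing the dual-basis sums so that the collapse identity can be applied in the right order. Because the answer is predetermined --- both $\phi$ and $\psi$, as well as every rotation in sight, are simply $\id_C$ --- the substance of the argument lies entirely in checking that the diagrammatic conventions and the four Frobenius structures on the square are mutually consistent. No new algebraic input beyond the defining property of dual bases is required.
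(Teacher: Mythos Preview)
Your strategy --- identify both $\phi$ and $\psi$ with $\id_C$ under the canonical collapses $C \ot_B B \ot_A A \cong C$ etc., then argue that any $180^\circ$-rotation of $\phi$ is again $\id_C$ --- is the right intuition and would succeed if executed carefully. But your justification for cyclicity contains a gap. The two candidate $180^\circ$-rotations of $\phi$ are \emph{not} related by the swap $B \leftrightarrow B'$; they differ in which of the two biadjunctions one uses to bend the strands. One rotation bends using the (inclusion, multiplication) pair of units and counits, and this one does collapse to $\id_C$ using only single-extension Frobenius identities, exactly as you describe. The other rotation bends using the (trace, comultiplication) pair, and here the composite trace $\pa^C_A$ is built via $B'$ while the composite comultiplication $\Delta^C_A$ is built via $B$. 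Collapsing this second rotation to $\id_C$ requires precisely the compatibility hypothesis $\pa^{B}_A \pa^C_{B} = \pa^{B'}_A \pa^C_{B'}$ of the square. Your assertion that ``no new algebraic input beyond the defining property of dual bases is required'' misses this; without compatibility the second rotation is genuinely a different map, and cyclicity fails.

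The paper's argument makes this dependence transparent and avoids your bookkeeping entirely. Rather than compute the full rotation, it reduces cyclicity to two mate-type equalities of much simpler diagrams. One is an equality of $R^{rb}$-bimodule maps $R \to R^{rb}$: the two sides send $f \mapsto \pa^r_{rb}\pa_r(f)$ and $f \mapsto \pa^b_{rb}\pa_b(f)$ respectively, so the check \emph{is} the compatibility assumption and nothing more. The other is an equality of maps $R \ot_{R^{rb}} R \to R$, where both sides are visibly multiplication. This organization isolates compatibility as the sole nontrivial ingredient, rather than burying it inside a dual-basis simplification.
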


Note that it is impossible for two 1-manifolds of the same color to cross, since that would result in an inconsistent labeling of regions. Given
caps and cups of each color, as well as crossings between different colors, we may produce collections of oriented colored 1-manifolds in
the planar strip, such that the intersection between manifolds of a different color is always transverse. Not every collection of
1-manifolds will be considered, but only the diagrams which result in consistent labelings of regions, which we call \emph{consistent
diagrams}.

Finally, multiplication by a polynomial $f \in R^I$ is an endomorphism of $R^I$ as an $R^I$-bimodule, which we depict as a box. There is now a bimodule morphism interpretation of
any linear combination of consistent diagrams with boxes in various regions, where a box in a region labeled $I$ is itself labeled by a polynomial in $R^I$.

%===========================
\subsection{Relations}\label{relations}
%===========================

Here are some relations among bimodule maps which hold for any Frobenius extension or hypercube. All proofs are found in Section \ref{further}.

Throughout we apply the following convention: Given a
  Frobenius square $\Gamma$ and two subsets $I \subset J$ the set of
  all subsets lying between $I$ and $J$ indexes a smaller Frobenius
  hypercube. Any relations which hold in this smaller hypercube are
  also valid in the larger one. Also, converting a relation in a
  smaller hypercube to that of a larger one simply involves adding the
  subset $I$ to all labels in the relation. We will state all
  relations in their ``minimal'' form (with $|\Gamma| = 1, 2$ or $3$)
  but in proofs we allow ourselves the flexibility of adding any
  indices to relations we have already established.
  
The relations below will use Sweedler notation for coproducts. Suppose that $I \subset J$. The existence of $\Delta^I_{J(1)}$ and
$\Delta^I_{J(2)}$ inside a box in a diagram implies that we take the sum over $\alpha$ (where $\alpha$ indexes dual bases $\{x_\alpha\}$
and $\{y_\alpha\}$ of $R^I$ over $R^J$) of the diagram with $x_\alpha$ replacing $\Delta^I_{J(1)}$ and $y_\alpha$ replacing
$\Delta^I_{J(2)}$. We also use $\mu^I_J$ to represent the product
$\Delta^I_{J(1)}\Delta^I_{J(2)} \in R^I$. We write $\mu_J$ instead of $\mu_J^\emptyset$.

First we have the \emph{isotopy relations}, which are necessary for isotopic diagrams to represent the same map. \begin{equation} \label{isotopy}
\ig{1}{isotopy} \end{equation} \begin{equation} \label{crosscyclic} \ig{1}{crosscyclic} \end{equation} \begin{equation}
\label{crosscyclic2} \ig{1}{crosscyclic2} \end{equation}

All future relations hold when rotated or with the colors switched, but not generally with the orientations reversed. Now we have the
relations which hold for any Frobenius extension $\iota \co A \subset B$.

\begin{equation} \label{polymult} \ig{1}{polymult} \end{equation}
\begin{equation} \label{polyslide} \ig{1}{polyslide} \end{equation}
\begin{equation} \label{cccirc} \ig{1}{cccirc} \end{equation}
\begin{equation} \label{ccwcirc} \ig{1}{ccwcirc} \end{equation}
\begin{equation} \label{Bsplitting} \ig{1}{Bsplitting} \end{equation}

Now consider a Frobenius square. We call these the \emph{Reidemeister II relations}. We assume that $\Gamma=\{r,b\}$ where the colors red
and blue are assigned to $r$ and $b$.

\begin{equation} \label{R2oriented} \ig{1}{R2oriented} \end{equation}
\begin{equation} \label{R2unoriented1} \ig{1}{R2unoriented1} \end{equation}

In fact, there is a more general version of this relation. \begin{equation} \label{R2unoriented1var} \ig{1}{R2unoriented1var} \end{equation} It is implied by (\ref{R2unoriented1})
so long as $R$ is generated by $R^r$ and $R^b$. There are a few
stronger assumptions one might wish to make, which hold in example
\ref{SoergelEx}.

\begin{defn} We say that a Frobenius square satisfies condition $\star$ if we may choose dual bases $\{x_\alpha\}$ and $\{y_\alpha\}$ of $R$ over $R^b$ such that $x_\alpha \in R^r$. We say
that a Frobenius hypercube satisfies $\star$ if every square inside it does. \end{defn}

Condition $\star$ implies that $R$ is generated by $R^r$ and $R^b$, but a priori is stronger.  They are equivalent if all the $R^I$ are fields, or if the $R^I$ are positively graded algebras over a field and the degree $0$ part is just the scalars.   This second case includes the Soergel bimodule examples.

\begin{defn} We say that our Frobenius cube has \emph{no $\mu$-zero
    divisors} if $\mu_\Gamma \in R$ is not a zero divisor (in particular
  $\mu_\Gamma \ne 0$). 
\end{defn}

Assume $\Gamma$ has no $\mu$-zero divisors. Using the identity $\mu_J =
\mu_J^I\mu_I$ we see first that $\mu_I$ is not a zero divisor for any
$I \subset \Gamma$, and then that that $\mu_J^I \in R^I$
are not zero divisors either, for any $I \subset J$. This explains
the terminology.

\begin{remark}
If a Frobenius cube has no $\mu$-zero divisors then each extension
$R^J \subset R^I$ splits when we invert $\mu_I^J$. Hence, after
localizing one can regard the 
cube as being ``semi-simple''. Here it can be easier to check
relations. Furthermore, because all bimodules involved inject into
their localizations, any relations which hold in the localized Frobenius
cube also hold in $\Gamma$.
\end{remark}

\begin{defn} We say that a Frobenius cube satisfies condition R3, if for every triple of colors $r$, $g$, $b$ we have that the ${\mu_{gr} \mu_{rb} \mu_{gb}} \mid {\mu_{grb} \mu_g \mu_r \mu_b}$.
\end{defn}

\begin{remark}
Again, after localization every Frobenius cube satisfies condition $R3$.  In fact, it's easy to see that $\frac{\mu_{grb} \mu_g \mu_r \mu_b}{\mu_{gr} \mu_{rb} \mu_{gb}}$ makes sense if you localize by any of $\mu_{gr}$, $\mu_{rb}$, or $\mu_{gb}$.  Furthermore, if $R$ is a UFD and $\mu_{gr}$, $\mu_{rb}$, and $\mu_{gb}$ are relatively prime then $R3$ automatically holds.
\end{remark}

We now turn to the consequences of these assumptions. When $R$ is generated by $R^r$ and $R^b$, we also have the following relation. \begin{equation} \label{R2unoriented2} \ig{1}{R2unoriented2} \end{equation} The polynomial in the box is
$\pa_r(\Delta^b_{rb(1)})\Delta^b_{rb(2)}$. When the square satisfies
$\star$  and has no $\mu$-zero divisors, then there is a nicer expression for $\pa_r(\Delta^b_{rb(1)})\Delta^b_{rb(2)}$, which is
$\frac{\mu_{rb}}{\mu_r \mu_b}$. Consequently, $\frac{\mu_{rb}}{\mu_r
  \mu_b}$ is a genuine polynomial, not just an element of the
localization.

Now consider a Frobenius cube. We call these the \emph{Reidemeister III relations}. We assume that $\Gamma=\{r,b,g\}$ where green denotes $g$.

\begin{equation} \label{R3oriented} \ig{1}{R3oriented} \end{equation}

Our final relation holds if the square satisfies $\star$, has no $\mu$-zero divisors, and satisfies condition R3: it
is \begin{equation} \label{R3unoriented}
  \ig{1}{R3unoriented} \end{equation} The polynomial in the
box is $\frac{\mu_{grb} \mu_g 
\mu_r \mu_b}{\mu_{gr} \mu_{rb} \mu_{gb}}$, which is a genuine polynomial by assumption R3.

\begin{defn} Suppose that $\Gamma$ satisfies $\star$. We denote by $\mD(\Gamma)$ the diagrammatic 2-category whose objects are subsets $I \subset \Gamma$, whose 1-morphisms are
generated by up and down arrows for each edge of the hypercube, and whose 2-morphisms are (linear combinations of) consistent diagrams of colored transversely-intersecting oriented
planar 1-manifolds with boxes, modulo the relations above. Let $\mF$ be the obvious 2-functor $\mD(\Gamma) \to \mC(\Gamma)$, sending cups caps and crossings to the appropriate
bimodule maps. \end{defn}

 We do not claim that $\mF$ is full or faithful. In any reasonable example there will be additional relations which do not hold in the general case, so that $\mF$ will not be
faithful. For the Soergel bimodule example $\mF$ is full, though we do not know if $\mF$ will be full in general. We also do not assert that this is a complete list of relations for
a generic Frobenius hypercube, or that every generic relation can be expressed with diagrams using at most 3 colors.

\begin{remark} \label{simplify2} It is a consequence of the relations that any diagram in $\mD(\Gamma)$ with only 2 colors can be ``simplified," i.e. expressed as a linear
combination of diagrams without any closed 1-manifold components. Any 2-color diagram without boundary will reduce to a box. The simplification procedure is to pull apart strands
using the Reidemeister II relations, until we separate a closed component from the rest of the diagram, and reduce it to a box using one-color relations. 

% Even though we may always
% pull strands apart, we may \emph{not} cross them back over each other using relation \eqref{R2unoriented1} or \eqref{R2unoriented2} unless the appropriate boxes are present.

Diagrams with 3 colors can not be simplified: we are only allowed to apply the Reidemeister III move (\ref{R3unoriented}) in one direction, unless the appropriate polynomials
are present. This deficiency can not be remedied: for Example \ref{SoergelEx}, there are bimodule morphisms which can not be expressed by diagrams without closed 1-manifolds.
\end{remark}

%\begin{remark} We quickly comment on the form of the polynomials in relations (\ref{R2unoriented2}) and (\ref{R3unoriented}). It is easy to show that $\mu^I_K = \mu^I_J \mu^J_K \in R^I$ whenever $I \subset J \subset K$. One can show that there exists one polynomial $\Pi_I$ for each subset $I \subset \Gamma$, with $\Pi_\emptyset=1$, such that $\mu_I$ is the product of $\Pi_J$ over all $J \subset I$. Then $\frac{\mu_{rb}}{\mu_r \mu_b} = \Pi_{rb}$, and $\frac{\mu_{grb} \mu_g \mu_b \mu_r}{\mu_{rb} \mu_{rg} \mu_{gb}} = \Pi_{grb}$.

%For example \ref{SoergelEx}, these polynomials are easy to compute. When $I = \{i,i+1,\ldots,j\}$ is connected, then $\Pi_I= x_i- x_{j+1}$ is the highest root of $W_I$. When $I$ is not connected, $\Pi_I=1$. {\color{blue} Note, however, that the polynomials appearing in relation \eqref{R2unoriented2} (resp. \eqref{R3unoriented}) depend on which 2-dimensional (resp. 3-dimensional) cube is chosen within the $n$-dimensional hypercube. If $K$ is the label of the innermost region of the LHS of \eqref{R3unoriented}, and $\{i_1, i_2, i_3\}$ are the colors of the three strands, then the polynomial which appears on the RHS is the product of all the roots in $W_K$ which are not roots of $W_{K \setminus i_m}$ for $m=1,2,3$. }\end{remark}

%{\color{red}GW: I don't see why we should get genuine polynomials unless $R$ is a UFD. I rewrote this remark as follows:}

\begin{remark} We quickly comment on the form of the polynomials in
  relations (\ref{R2unoriented2}) and (\ref{R3unoriented}) under the
  assumption that $\Gamma$ has no $\mu$-zero divisors. It is easy
  to show that $\mu^I_K = \mu^I_J 
\mu^J_K \in R^I$ whenever $I \subset J \subset K$. One can use this to
show the existence of an element $\Pi_I \in R[\mu_\Gamma^{-1}]$ for
each subset $I \subset \Gamma$, with $\Pi_\emptyset=1$, such that 
$\mu_I$ is the product of $\Pi_J$ over all $J \subset I$. For example if $\Gamma = \{ r, b \}$ then using that
\[
\mu_{rb} = \mu_{rb}^b \mu_b = \mu_{rb}^r\mu_r
\]
we have $\Pi_{rb} = \mu_{rb}^b / \mu_r = \mu_{rb}^r / \mu_b =
\mu_{rb}/\mu_r\mu_b$. Similarly  $\frac{\mu_{grb} \mu_g
  \mu_b \mu_r}{\mu_{rb} \mu_{rg} \mu_{gb}} = 
\Pi_{grb}$ etc.

In the setting of \ref{SoergelEx}, these polynomials are easy to
compute. When $I = \{i,i+1,\ldots,j\}$ is an interval, then $\Pi_I= x_i
- x_{j+1}$ is the highest root of $W_I$. When $I$ is
not an interval, $\Pi_I=1$. Similar statements hold for any finite
Weyl group, in the setting of \ref{generalSoergelEx}.\end{remark}

\begin{remark} \label{history} A finite index von Neumann subfactor $N \subset M$ is an example of a Frobenius extension.  Chains and squares of extensions have been studied before by people who work in the field of subfactors \cite{BJ,GJ,Lan,SW}. In that context, the $2$-category defined above is called the standard invariant, and similar diagrammatics have been developed for standard invariants of lattices of subfactors. However, there is a major difference between the subfactor world and ours: the rings they use are non-commutative but their center is trivial.  Thus the complicated parts of their theory (non-commuting or non-cocommuting quadrilaterals) do not appear here, while the complicated part our our theory (the behavior of the boxes) does not appear there.  For example,  in the subfactor setting a relation like (\ref{R2unoriented1}) or (\ref{R3unoriented}) must be trivial if it exists, in the sense that the polynomial(s) appearing are equal to $1$.
\end{remark}

%\begin{remark} \label{history} Chains and squares of extensions have been studied before by people who work in the field of subfactors \cite{BJ,GJ,Lan,SW}. Similar diagrammatics
%are developed (there and elsewhere), although usually a version with trivalent vertices. However, there is a major difference between the subfactor world and ours: the ``rings"
%they use are non-commutative and their center is trivial, so there are no boxes. As a consequence, a relation like (\ref{R2unoriented1}) or (\ref{R3unoriented}) must be trivial if
%it exists, in the sense that the polynomial(s) appearing are equal to $1$ (this is the commuting and cocommuting case mentioned in \cite{GJ,SW}). \end{remark}

%%%%%%%%%%%%%%%%%%%%%%%%%%%%
%%%%%%%%%%%%%%%%%%%%%%%%%%%%
%%
\section{Further Details and Proofs}
\label{further} 
%%
%%%%%%%%%%%%%%%%%%%%%%%%%%%%
%%%%%%%%%%%%%%%%%%%%%%%%%%%%

%=====================
\subsection{Frobenius Extensions}
%=====================

Suppose that $A \subset B$ is a Frobenius extension. The following statements are all standard, and hold for all $f \in B$.

\begin{equation} f\Delta = \Delta f \in B \ot_A B\end{equation}
\begin{equation} \Delta_{(1)} \ot \pa(f \Delta_{(2)}) = f \ot 1 \in B \ot_A B \end{equation}
\begin{equation} \Delta_{(1)} \pa(f \Delta_{(2)}) = f \in B \end{equation}

These three equations are sufficient to prove (\ref{isotopy}) and (\ref{Bsplitting}). Relations (\ref{polymult}) and (\ref{polyslide}) are
obvious properties of polynomials. Relations (\ref{cccirc}) and (\ref{ccwcirc}) can be checked on the element $1$, and follow immediately.
The relation (\ref{Bsplitting}) implies the splitting of $B$ into a free $A$ module by decomposing the identity element into a sum of
orthogonal idempotents.

\begin{claim} \label{1colorequiv} The 2-functor $\mF \co \mD(A \subset B) \to \mC(A \subset B)$ is an equivalence of categories.
\end{claim}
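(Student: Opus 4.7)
The plan is to verify that $\mF$ is bijective on objects, bijective on 1-morphisms, and bijective on each 2-morphism space. The first two are immediate from the definitions: on each side the objects correspond to $A$ and $B$, and the 1-morphisms correspond bijectively to words in $\uparrow, \downarrow$ of matching source and target, with $\mF$ sending these to compositions of $\Ind$ and $\Res$ in $\Bim$. So the content lies entirely in showing $\mF$ is a bijection on each 2-morphism space $\Hom_{\mD}(w_1,w_2) \to \Hom_{\mC}(\mF w_1, \mF w_2)$.

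The first substantive step is to put every diagram in $\mD(A \subset B)$ into a normal form. Since only one color occurs, no crossings are possible, so every diagram decomposes as an orientation-compatible non-crossing arc system matching the boundary, together with disjoint closed circles and polynomial boxes. Relations (\ref{cccirc}) and (\ref{ccwcirc}) absorb any closed circle (empty or containing a polynomial) into a polynomial factor in its surrounding region, while (\ref{polymult}) and (\ref{polyslide}) consolidate and move polynomial boxes. Using (\ref{Bsplitting}) together with these, every box can then be pushed into a single preferred region of each connected component of the complement of the arc system. The result is that every 2-morphism is a linear combination of normal forms, each specified by an orientation-respecting non-crossing matching of the boundary plus one polynomial decoration in a designated region of each complementary component.

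For surjectivity on 2-morphisms (fullness), I would exploit the biadjunction $\Ind \dashv \Res \dashv \Ind$ to reduce $\Hom_{\mC}(\mF w_1, \mF w_2)$ to $\Hom_{\mC}(\id, w)$ for a suitable alternating word $w$. The latter can be computed directly using the Frobenius structure: since $B$ is free over $A$ with dual bases $\{x_\alpha\},\{y_\alpha\}$, such a Hom space is a free $A$- or $B$-module with a basis indexed by tensor products of the $x_\alpha$'s, and each basis vector is visibly the image under $\mF$ of some normal-form diagram built from $\iota$, $\pa$, $m$, $\Delta$, and polynomial boxes. This exhibits $\mF$ as surjective on 2-morphisms.

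The main obstacle is faithfulness: showing that distinct normal forms map to linearly independent bimodule maps. This reduces to the nondegeneracy of the Frobenius trace pairing. Distinct non-crossing matchings of the boundary give rise to distinct compositions of adjunction units/counits, which (after applying biadjunction back to a single Hom space of the form $\Hom_{\mC}(\id,w)$) land in distinct positions of the explicit free-module basis above; distinct polynomial decorations in a given region produce distinct maps because the extension is free and the trace pairing is nondegenerate. Once spanning and linear independence of normal forms under $\mF$ are both established, full faithfulness of $\mF$ on 2-morphisms follows, completing the equivalence.
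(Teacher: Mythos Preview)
Your overall strategy is sound and the surjectivity argument via biadjunction is fine, but the faithfulness step has a real gap. You assert that ``distinct non-crossing matchings of the boundary\ldots land in distinct positions of the explicit free-module basis,'' but this is not true as stated: relation (\ref{Bsplitting}) is precisely a linear dependence among diagrams with \emph{different} underlying matchings---it rewrites one arc configuration as a sum of another arc configuration decorated by dual-basis boxes. So your proposed normal forms (all matchings, each with one polynomial per region) are overcomplete, and the linear-independence claim fails. To repair the argument you would need to use (\ref{Bsplitting}) to reduce to a \emph{single} fixed matching and then argue that distinct polynomial decorations on that matching give distinct bimodule maps; this is doable but requires more than an appeal to nondegeneracy of the trace.

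The paper's proof sidesteps this bookkeeping. It observes that the relations already yield, inside $\mD$, both biadjointness and the direct-sum decomposition ${}_A B_A \cong ({}_A A_A)^{\oplus n}$. Iterating these two facts expresses \emph{every} Hom space in $\mD$ as a direct sum of copies of $\End({}_A A_A)$ and $\End({}_B B_B)$; the same decomposition holds in $\mC$ and is preserved by $\mF$. Full faithfulness therefore reduces to checking that $\mF$ is an isomorphism on $\End(\id_A)$ and $\End(\id_B)$, and for these any closed diagram collapses to a single box, giving exactly $A$ and $B$ on both sides. That reduction to the identity endomorphism rings is what makes the paper's argument short; your normal-form route can be made to work but needs more care than you have given it.
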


\begin{proof} Let $M$ and $N$ be any two 1-morphisms between the same two objects in $\mD=\mD(A \subset B)$. The relations of $\mD$ imply biadjointness of $_A B_B$ and $_B B_A$, and
the isomorphism $_A B_A \cong (_A A_A)^{\oplus n}$ where $n$ is the rank of $B$ over $A$. Using only these two facts, it is a simple exercise to express $\Hom_{\mD}(M,N)$ as a
direct sum of copies of $\End(_A A_A)$ and $\End(_B B_B)$. In $\mC$, the same expression for $\Hom_{\mC}(M,N)$ works, and the functor $\mF$
preserves both the adjunction morphisms and the direct sum decomposition, meaning that this expression for $\Hom(M,N)$ is functorial under $\mF$. Therefore, $\mF$ is fully faithful
if and only if it induces isomorphisms on $\End(_A A_A)$ and $\End(_B B_B)$.

Since there is only one color, it is a simple inductive argument to show that any nested combination of circles and boxes in $\mD$ will reduce to a box. In particular, an endomorphism of
an empty region labelled $A$ reduces to a box labelled by $f \in A$, so that it is generated as an $A$-module by the identity map. This $A$-module maps under $\mF$ to the free rank 1
$A$-module $\End_\mC(_A A_A)$, and therefore this map is an isomorphism. The same holds true for $B$. \end{proof}

%=====================
\subsection{Chains of Frobenius Extensions}\label{frobchain}
%=====================

Suppose that $A \subset B \subset C$ is a chain of Frobenius extensions. We equip $C$ over $A$ with a trace map $\pa^C_A = \pa^B_A
\pa^C_B$. If $\{x_\alpha\}$ and $\{y_\alpha\}$ are dual bases of $B$ over $A$, and $\{p_\beta\}$ and $\{q_\beta\}$ are dual bases of $C$
over $B$, then $\{x_\alpha p_\beta\}$ and $\{y_\alpha q_\beta\}$ are dual bases of $A$ over $C$. This makes it easy to see that
$\Delta^C_A = \Delta^B_A \Delta^C_B$, where $\Delta^C_B \colon C \to C \ot_B C \cong C \ot_B B \ot_B C$, and $\Delta^B_A$ is applied to
the middle factor to reach $C \ot_B B \ot_A B \ot_B C \cong C \ot_A C$. In general, we will always identify $C \ot_B B \ot_A A$ with $C
\ot_A A$ using the canonical isomorphism. We have:

\begin{equation} \Delta^C_A = \Delta^B_A \Delta^C_B \end{equation}
\begin{equation} \mu^C_A = \mu^C_B \mu^B_A \end{equation}
\begin{equation} \label{eqn1} \Delta^C_{A(1)} \ot \partial^C_B(\Delta^C_{A(2)}) = \iota^C_B(\Delta^B_{A(1)}) \ot \Delta^B_{A(2)}.
	\end{equation}
\begin{equation} \label{eqn2} f \iota^C_B(\Delta^B_{A(1)}) \ot \Delta^B_{A(2)} = \Delta^C_{A(1)} \ot \partial^C_B(f \Delta^C_{A(2)}) \textrm{ for
	any } f \in C. \end{equation}
\begin{equation} \Delta^C_{A(1)} \partial^C_B(\Delta^C_{A(2)}) = \mu^B_A \end{equation}	
	
Similar statements hold when applying the operator $\partial$ to the left side instead of the right. We shall always assume this
right-left symmetry.

The interesting equations (\ref{eqn1}) and (\ref{eqn2}) follow from unraveling this equality. \igc{1}{ABCclaim}

%=====================
\subsection{Squares of Frobenius Extensions}\label{frobsquare}
%=====================

Suppose that $\Gamma = \{r,b\}$. \igc{1}{square} Recall that we label trace maps, comultiplications and the like by subsets of $\Gamma$. For instance, $\partial_{rb}^r \colon R^r \to R^{rb}$, or $\Delta_b \colon R \to R \ot_b R$, which is short for $R \ot_{R^b}
R$. Assume that this square is compatible, so that $\partial^r_{rb} \partial_r = \partial^b_{rb} \partial_b = \partial_{rb}$. This implies
the dual statement about comultiplication. We shall let inclusion maps $\iota$ be implied, so that $\partial_b(f)$ for $f \in R^r$
denotes $\partial_b(\iota_r(f))$.

\begin{claim} Relations (\ref{crosscyclic}) and (\ref{crosscyclic2}) hold, so that an isotopy class of diagram unambiguously represents a
morphism. \end{claim}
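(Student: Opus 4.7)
The plan is to verify both relations by direct computation, using explicit formulas for the crossing and the biadjunction cups and caps, together with the dual-basis identities from \S\ref{frobchain}. Recall that the upward crossing $\phi$ is the $(R, R^{rb})$-bimodule isomorphism
\[
R \ot_{R^b} R^b \ot_{R^{rb}} R^{rb} \;\to\; R \ot_{R^r} R^r \ot_{R^{rb}} R^{rb}, \qquad f \ot 1 \ot 1 \mapsto f \ot 1 \ot 1,
\]
and both source and target are canonically identified with $R$; under these identifications $\phi$ is simply the identity map on $R$. This observation is the engine of the proof: any rotation of $\phi$ will collapse, after using the biadjunction cups and caps, to a composition of multiplications, inclusions, traces, and comultiplications in the ambient algebras, with no ``genuine'' crossing content left to track.

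First I would write out the four cup and cap morphisms (inclusion, trace, multiplication, comultiplication) for each of the four edges of the square. A rotation of $\phi$ by $90^\circ$ corresponds to pre- and post-composing $\phi$ with a unit of adjunction on one side and a counit on the other. Relations (\ref{crosscyclic}) and (\ref{crosscyclic2}) then each assert that a specific such rotation agrees with another diagrammatic expression, typically the downward crossing or a differently-rotated copy of the upward crossing.

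To verify each equality I would trace a generating element through both sides. Since $\phi$ is the identity under the canonical identifications, each composite becomes a purely algebraic expression in $R$. Equations (\ref{eqn1}) and (\ref{eqn2}) from \S\ref{frobchain}, which describe how $\pa$ and $\Delta$ interact across a chain of Frobenius extensions $A \subset B \subset C$, allow one to commute a comultiplication of one color past a trace of the other color, at the cost of inserting the remaining comultiplication. A dual-basis summation then collapses the result to the target expression.

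The main obstacle will be pure bookkeeping: there are several distinct rotations to check, the tensor products are taken over different subrings, and one must be careful about the order of the Sweedler indices coming from $\Delta_r$ versus $\Delta_b$. Once a single rotation is handled, however, the remaining cases follow by the manifest $r \leftrightarrow b$ symmetry of the square and by iteration, and the $180^\circ$ identification asserted in Claim \ref{itscyclic} is obtained by applying a $90^\circ$ rotation twice.
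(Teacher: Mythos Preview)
Your plan is not wrong, but it is substantially more elaborate than what is needed, and it misses the simplification that makes the paper's argument essentially a two-line proof. The paper does not invoke the chain identities \eqref{eqn1} and \eqref{eqn2} at all, nor does it track any Sweedler sums or perform $90^\circ$ rotations iteratively. Instead it simply identifies, for each of the two relations, what bimodule map the entire diagram represents, and observes the equality directly:
\begin{itemize}
\item For \eqref{crosscyclic}, both sides are $R^{rb}$-bimodule maps $R \to R^{rb}$; unwinding the cups/caps and crossing, the LHS is $f \mapsto \pa^r_{rb}\pa_r(f)$ and the RHS is $f \mapsto \pa^b_{rb}\pa_b(f)$, which agree precisely by the compatibility hypothesis on the square.
\item For \eqref{crosscyclic2}, both sides are maps $R \ot_{R^{rb}} R \to R$, and each is just the multiplication map.
\end{itemize}
So the only input is the compatibility $\pa^r_{rb}\pa_r = \pa^b_{rb}\pa_b$ and the observation that the crossing is the identity under the canonical identifications (which you do state). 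Your proposed route through dual-basis manipulations and the equations of \S\ref{frobchain} would presumably arrive at the same place, but at the cost of bookkeeping that the paper avoids entirely; in particular, no comultiplication ever needs to appear in the argument for \eqref{crosscyclic}, and nothing beyond ``both sides multiply'' is needed for \eqref{crosscyclic2}.
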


\begin{proof} For (\ref{crosscyclic}) these diagrams represent maps $R \to R^{rb}$ as $R^{rb}$-bimodules. The LHS sends $f \in R$ to
$\pa^r_{rb} \pa_r f$ and the RHS to $\pa^b_{rb} \pa_b f$, which we have assumed are equal. For (\ref{crosscyclic2}) these diagrams
represent maps $R \ot_{rb} R \to R$, and both sides are simply multiplication. \end{proof}

The oriented Reidemeister II move (\ref{R2oriented}) is obvious. To examine the non-oriented Reidemeister II moves, we find formulae for
sideways crossings. The left-pointing crossing gives a map from $R \to R^r \ot_{rb} R^b$, for which the image of $f$ is clearly the
element in the next equality.

\igc{1}{sidewaysmap}

\begin{equation} \label{sidewayseq}
\Delta^r_{rb(1)} \ot \partial_b(f\Delta^r_{rb(2)}) = \partial_r(f \Delta^b_{rb(1)}) \ot \Delta^b_{rb(2)} =
\partial_r(f \Delta_{rb(1)}) \ot \partial_b(\Delta_{rb(2)}) \in R^r \ot_{rb} R^b. \end{equation}

The right-pointing crossing gives a map $R^r \ot_{rb} R^b \to R$, which can easily be verified to be the multiplication map. The counterclockwise Reidemeister II move (\ref{R2unoriented1})
and its analog (\ref{R2unoriented1var}) are now obvious. Because the maps are $(R^r,R^b)$-linear, one can check these equalities on the image of $1 \ot 1$.

Consider the clockwise Reidemeister II move (\ref{R2unoriented2}), an equality of endomorphisms of $R$ as an $(R^r,R^b)$-bimodule. The RHS is a morphism which is $R$-linear, while the LHS is
not obviously $R$-linear. The image of $1$ under both sides is $\pa_r(\Delta^b_{rb(1)}) \Delta^b_{rb(2)}$, or any of its equivalent presentations above. Therefore the relation will hold if
and only if the LHS is $R$-linear. Suppose that $R$ is generated as an algebra by its subalgebras $R^r$ and $R^b$. Then the $(R^r,R^b)$-bilinearity of the LHS will actually imply
$R$-linearity.

\begin{claim} If the square satisfies $\star$ and has no
    $\mu$-zero divisors then $$\pa_r(\Delta^b_{rb(1)})\Delta^b_{rb(2)} = \frac{\mu_{rb}}{\mu_r \mu_b}.$$ \end{claim}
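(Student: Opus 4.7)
The plan is to bootstrap from the chain-formula identities of Section~\ref{frobchain}. Using the chain $R^{rb} \subset R^b \subset R$, dual bases of $R/R^{rb}$ factor as $\{x_\alpha u_i\}$ and $\{y_\alpha v_i\}$, where $\{x_\alpha,y_\alpha\}$ are dual bases of $R/R^b$ and $\{u_i,v_i\}$ of $R^b/R^{rb}$; this presents $\Delta_{rb}$ in a form that exposes the factor $x_\alpha \in R^r$ (available by $\star$). The \emph{other} chain $R^{rb} \subset R^r \subset R$ supplies the identity $\pa_r(\Delta_{rb(1)}) \Delta_{rb(2)} = \mu^r_{rb}$ (the left-sided form of $\Delta^C_{A(1)} \pa^C_B(\Delta^C_{A(2)}) = \mu^B_A$ from Section~\ref{frobchain}). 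Plugging the $b$-factored expression for $\Delta_{rb}$ into this identity and commuting each $x_\alpha$ past $\pa_r$ should yield $\mu_b$ times the quantity $\xi := \pa_r(\Delta^b_{rb(1)}) \Delta^b_{rb(2)}$ that we want to compute.

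Concretely, choose dual bases $\{x_\alpha\}, \{y_\alpha\}$ of $R/R^b$ with $x_\alpha \in R^r$ (by $\star$) and $\{u_i\}, \{v_i\}$ of $R^b/R^{rb}$, so that $\Delta_{rb} = \sum_{\alpha,i} x_\alpha u_i \otimes y_\alpha v_i$. Substituting into $\pa_r(\Delta_{rb(1)}) \Delta_{rb(2)} = \mu^r_{rb}$, using $R^r$-linearity of $\pa_r$ (available precisely because $x_\alpha \in R^r$), and then commutativity:
\[
\mu^r_{rb} \;=\; \sum_{\alpha,i} \pa_r(x_\alpha u_i)\, y_\alpha v_i \;=\; \sum_{\alpha,i} x_\alpha y_\alpha\, \pa_r(u_i)\, v_i \;=\; \Big(\sum_\alpha x_\alpha y_\alpha\Big)\Big(\sum_i \pa_r(u_i) v_i\Big) \;=\; \mu_b \cdot \xi.
\]
Since $\mu_{rb} = \mu_r \mu^r_{rb}$ (Section~\ref{frobchain}) and $\mu_r, \mu_b$ are not zero divisors (the no $\mu$-zero divisors hypothesis), dividing gives $\xi = \mu^r_{rb}/\mu_b = \mu_{rb}/(\mu_r \mu_b)$; in particular, this ratio is a genuine polynomial in $R$.

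The only subtle step is recognizing the right combination of ingredients: $\star$ is exactly what lets us choose dual bases with $x_\alpha \in R^r$, and the left-sided $\mu$-identity is chosen so that when $\Delta_{rb}$ is expanded through the orthogonal chain $R^{rb} \subset R^b \subset R$, the $R^r$-part of each basis element pulls through $\pa_r$ and the double sum factors visibly as $\mu_b$ times $\xi$. The no $\mu$-zero divisors assumption enters only at the last step to justify dividing.
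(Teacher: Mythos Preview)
Your proof is correct and is essentially the same as the paper's: both choose dual bases $\{x_\alpha,y_\alpha\}$ of $R$ over $R^b$ with $x_\alpha\in R^r$ (using $\star$) and dual bases of $R^b$ over $R^{rb}$, then use $R^r$-linearity of $\pa_r$ to identify $\mu_b\cdot\xi$ with $\pa_r(\Delta_{rb(1)})\Delta_{rb(2)}=\mu^r_{rb}$, and finally divide by $\mu_b$. The only cosmetic difference is that the paper starts from $\xi$ and multiplies by $\mu_b$, whereas you start from $\mu^r_{rb}$ and factor out $\mu_b$.
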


\begin{proof} Choose dual bases $\{x_\alpha\}$ and $\{y_\alpha\}$ of $R$ over $R^b$ such that $x_\alpha \in R^r$. Also choose dual bases $\{p_\beta\}$ and $\{q_\beta\}$ of $R^b$ over
$R^{rb}$. Then we are examining $\sum_\beta \pa_r(p_\beta) q_\beta$. Multiplying this by $\mu_b$, we obtain $$\sum_{\alpha,\beta} x_\alpha \pa_r(p_\beta) y_\alpha q_\beta =
\sum_{\alpha,\beta} \pa_r(x_\alpha p_\beta) y_\alpha q_\beta \\ = \pa_r(\Delta_{rb(1)}) \Delta_{rb(2)} = \mu^r_{rb} = \frac{\mu_{rb}}{\mu_r}.$$ So dividing by $\mu_b$ again, we get
$\frac{\mu_{rb}}{\mu_r \mu_b}$. \end{proof}

\begin{prop} Suppose that all the Reidemeister II relations hold. Any morphism in $\mD(\Gamma)$ for a Frobenius square is a linear combination of diagrams with no closed components of
either color. \end{prop}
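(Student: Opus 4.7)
The plan is to argue by induction on the number of closed components in a diagram, aiming to rewrite any diagram with at least one closed component as a linear combination of diagrams with strictly fewer closed components; the induction terminates once no closed components remain.

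Given a diagram $D$ with at least one closed component, I would first pick an \emph{innermost} closed component $C$: one whose bounded disk $\Delta$ in the planar strip contains no other closed components. Say $C$ is red. The region-label consistency condition forces $\Delta$ to contain no other red arcs, since any arc meeting $\Delta$ must have its endpoints on the top or bottom of the strip (not in the interior of $\Delta$), so it would have to exit $\Delta$ by crossing $C$, and same-colored strands cannot cross. Hence $\Delta$ contains only blue sub-arcs and boxes, and the blue sub-arcs inside $\Delta$ are pairwise disjoint (for the same reason), each with both endpoints on $C$.

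If $C$ has no crossings with blue arcs, then $C$ bounds a disk containing only boxes, a monochromatic sub-diagram. The one-color reduction used in the proof of Claim \ref{1colorequiv} collapses any such red sub-diagram to a single box in the ambient red region, strictly decreasing the number of closed components, and induction applies. Otherwise, among the blue sub-arcs in $\Delta$ I would select one whose endpoints cut off a sub-arc $\tau \subset C$ for which the bigon bounded by the chosen arc and $\tau$ contains no other blue sub-arc; such an innermost bigon exists because the blue sub-arcs in $\Delta$ are disjoint arcs ending on $C$. After sliding any boxes out of this bigon using \eqref{polyslide}, the local picture is, up to rotation and color swap, the left-hand side of one of the Reidemeister II relations \eqref{R2oriented}, \eqref{R2unoriented1}, or \eqref{R2unoriented2}, and applying the relation replaces the bigon by a crossing-free configuration (possibly introducing a new box). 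This decreases the number of crossings of $C$ with blue arcs by two while preserving the number of closed components, so after finitely many iterations $C$ has no crossings and the previous case applies.

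The main obstacle is the orientation casework at the bigon: the two crossings can realize several orientation patterns, and the stated RII relations cover specific ones. The work is in verifying that rotation, color swap, the cyclicity of a crossing (Claim \ref{itscyclic}), and the isotopy relations \eqref{isotopy}, \eqref{crosscyclic}, \eqref{crosscyclic2} bring every bigon configuration into the form of one of the stated relations. A secondary and easier check is that the polynomial box introduced by \eqref{R2unoriented2} does not disturb the induction, since boxes neither contribute to the closed-component count nor obstruct the innermost-bigon analysis in subsequent iterations --- they can always be slid out of the way via \eqref{polyslide}.
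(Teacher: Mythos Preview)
Your approach is essentially the paper's: isolate an innermost bigon on a closed component, remove it with a Reidemeister~II relation, iterate until the component is a bare circle, and evaluate with the one-color relations. The organization differs only slightly: the paper inducts on proper closed subdiagrams (so the bigon interior reduces to a box by the inductive hypothesis), whereas you choose an \emph{innermost} closed component up front so that the bigon interior is already box-only. Both are fine.

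There is, however, one genuine gap. You assert that you can ``slide any boxes out of this bigon using \eqref{polyslide}'' and then apply one of \eqref{R2oriented}, \eqref{R2unoriented1}, \eqref{R2unoriented2}. But \eqref{polyslide} only moves a polynomial from a smaller ring into a larger one via $\iota$; it cannot push an arbitrary element of $R$ across a strand into $R^r$ or $R^b$. When the bigon interior carries the label $R$ --- exactly the orientation pattern underlying \eqref{R2unoriented1} --- the box is stuck. The paper handles this case by invoking the strengthened relation \eqref{R2unoriented1var}, whose left-hand side already contains an arbitrary $f \in R$ in the middle region; in the remaining orientation patterns the interior is $R^{rb}$, $R^r$, or $R^b$, and then sliding into an adjacent larger region is legitimate, after which \eqref{R2oriented} or \eqref{R2unoriented2} applies. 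Replace \eqref{R2unoriented1} by \eqref{R2unoriented1var} in your case analysis and the argument goes through.
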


\begin{proof} We show this by induction on subdiagrams, and on the total number of components. The base case is the empty diagram, perhaps with a box. Given a general diagram, let us
assume inductively that all proper closed subdiagrams may be reduced to boxes. Suppose there is a closed red component. If any blue strands intersect it, we choose an ``innermost"
strand, so that there is an instance of the LHS of some Reidemeister II relation without any additional blue strands crossing the red one in the picture. There may be other junk on the
interior of the picture, but the interior is a proper closed subdiagram so we may reduce the interior to a box. Depending on the orientation, we may either slide the box out and apply
(\ref{R2unoriented2}) or (\ref{R2oriented}), or may simply apply (\ref{R2unoriented1var}), so that the blue strand no longer intersects the red component. Repeating this argument, we
may assume that no blue strands intersect the red component. Then we reduce the interior of the component to a box, and use the circle relations (\ref{cccirc}) and (\ref{ccwcirc}) to
eliminate the red component in question. This entire procedure may have added boxes in various regions, but did not otherwise affect the topology of the diagram except by removing
components. By induction, the remaining diagram can be reduced. \end{proof}

\begin{remark} We do not have an analog of Claim \ref{1colorequiv} for the case of two colors. By simplifying diagrams, we know that the endomorphisms of an empty region labelled
$I$ are isomorphic to $R^I$, as desired. However, adjunction and direct sum decompositions are not sufficient to reduce any Hom space to this form. In Example \ref{generalSoergelEx} for a finite rank 2 Coxeter group, an additional relation is required. \end{remark}

\subsection{Cubes of Frobenius Extensions}\label{frobcube}

Suppose that $\Gamma = \{r,b,g\}$, and we have a compatible cube of Frobenius extensions.

For the obvious reasons, any upward-oriented Reidemeister III equality (\ref{R3oriented}) will hold.

Consider diagrams which look like Reidemeister III but with different orientations. Any picture where the interior triangle does
\emph{not} have an oriented boundary will be a rotation of the upward-oriented Reidemeister III move, and thus we are allowed to slide one
line over the crossing. Any picture with an interior triangle which has clockwise or counter-clockwise orientation will not permit such a
slide, requiring a relation like (\ref{R3unoriented}). Our proof assumes condition $\star$, that there are no $\mu$-zero divisors, and condition $R3$.

%{\color{red}  GW: are you sure about this? Perhaps we should say something like: we have not be able to find a proof unless we make some additional assumptions.}

\begin{proof}[Proof of \eqref{R3unoriented}]
We can
  write
\[
\Pi_{grb} = \frac{ \mu_{rgb}\mu_r\mu_g\mu_b}{\mu_{rg}\mu_{rb}\mu_{gb}}
  = pq^{-1}
\]
where
\[
p = \frac{\mu_{grb} \mu_g} {\mu_{rg} \mu_{bg}} =
\frac{\mu_{rgb}^g}{\mu_{rg}^g\mu_{bg}^g}
  \quad \text{and} \quad q = \frac {\mu_{rb}}
{\mu_r \mu_b}.
\]
Then we have
$$
{
\labellist
\small\hair 2pt
 \pinlabel {$pq^{-1}$} [ ] at 45 202
 \pinlabel {$=$} [ ] at 97 210
 \pinlabel {$q^{-1}$} [ ] at 145 203
 \pinlabel {$p$} [ ] at 145 172
 \pinlabel {$=$} [ ] at 189 210
 \pinlabel {$q^{-1}$} [ ] at 237 209
 \pinlabel {$=$} [ ] at 31 64
 \pinlabel {$q^{-1}$} [ ] at 84 91
 \pinlabel {$=$} [ ] at 141 64
\endlabellist
\centering
\ig{1}{R3calc1}
}
$$
where for the last equality we are claiming
$$
{
\labellist
\small\hair 2pt
 \pinlabel {$q^{-1}$} [ ] at 18 38
 \pinlabel {$=$} [ ] at 61 38
\endlabellist
\centering
\ig{1}{R3calc2}
}
$$
In other words, by \eqref{R2unoriented1var} we have reduced the proof
to the identity:
\begin{equation} \label{finalid}
\Delta_{rb,(1)}^b \otimes \partial_r (q^{-1} \Delta_{rb,(2)}^b) = 1
\otimes 1 \quad \text{in $R^b \otimes_{R^{rb}} R^r$}.
\end{equation}
Note that
\[ q^{-1} = (\mu_{rb}^b)^{-1} \mu_r \]
and by condition $\star$ we can choose dual bases $\{ c_\alpha \}$ and $\{
d_\alpha\}$ for $R^r$ over $R$ such that $c_\alpha \in R^b$. Then we have
\[
q^{-1} = \sum_\alpha (\mu_{rb}^b)^{-1}c_\alpha d_\alpha
\]
and we get identity \eqref{finalid} as follows:
\begin{align*}
\Delta_{rb,(1)}^b \otimes \partial_r (q^{-1} \Delta_{rb,(2)}^b) & =
\sum_\alpha \Delta_{rb,(1)}^b \otimes \partial_r(  (\mu_{rb}^b)^{-1}  c_\alpha d_\alpha
\Delta_{rb,(2)}^b) \\
& \stackrel{(1)}{=} \sum_\alpha (\mu_{rb}^b)^{-1} \Delta_{rb,(1)}^b c_\alpha
\otimes \partial_r ( d_\alpha \Delta_{rb,(2)}^b) \\
& \stackrel{(2)}{=} (\mu_{rb}^b)^{-1} \Delta_{rb,(1)}^b \Delta_{rb,(2)}^b \otimes 1 = 1
\otimes 1.
\end{align*}
For (1) we have used \eqref{sidewayseq}, which implies that 
\[ \Delta_{rb,(1)}^b \otimes \partial_r(  (fg
\Delta_{rb,(2)}^b) = f \Delta_{rb,(1)}^b \otimes \partial_r(  (g
\Delta_{rb,(2)}^b) \quad \text{for $f \in R^b$.}
\]
Finally, (2) follows from \eqref{eqn2}
with $A = B = R^r$ and $C = R$.
\end{proof}

\end{document}